\newtheorem{thm}{Theorem}[section]
\newtheorem{prop}[thm]{Proposition}
\newtheorem{lem}[thm]{Lemma}
\theoremstyle{definition}
\newtheorem{dfn}[thm]{Definition}
\theoremstyle{remark}
\newtheorem{rem}{Remark}
\newcommand{\C}{\mathbb{C}}
\newcommand{\Z}{\mathbb{Z}}
\newcommand{\ch}{\mathrm{ch}}
\title{A basis of the Atiyah-Segal invariant polynomials}
\author{Kiyonori Gomi}
\address{
Department of Mathematics, 
Kyoto University, 
Kyoto, 606-8502, 
Japan.
}
\email{kgomi@math.kyoto-u.ac.jp}
\date{}
\begin{document}

\begin{abstract}
For twisted $K$-theory whose twist is classified by a degree three integral cohomology of infinite order, universal even degree characteristic classes are in one to one correspondence with invariant polynomials of Atiyah and Segal. The present paper describes the ring of these invariant polynomials by a basis and structure constants. 
\end{abstract}

\maketitle



\section{Introduction} 
\label{sec:introduction}


\subsection{Atiyah-Segal invariant polynomials}

The ring of Atiyah-Segal invariant polynomials \cite{A-Se2} is a subring in the polynomial ring $A = \C[x_1, x_2, \cdots]$ on generators $x_i$ of degree $i$. Its definition \footnote{In \cite{A-Se2}, the ring is first defined by using the variables $s_i = i!x_i$.} is $J = \mathrm{Ker}(d)$, the kernel of the derivation $d : A \to A$ given by 
\begin{align*}
dx_1 &= 0, &
dx_i &= x_{i-1}, \ (i > 1).
\end{align*}
Originally, $J$ is introduced as the ring of the universal Chern classes of \textit{twisted $K$-theory} \cite{A-Se1} whose twist is classified by a degree three integral cohomology class of infinite order. While more study is dedicated to twisted $K$-theory in recent years, few is known about its Chern classes and $J$. For example, $J$ is not isomorphic to a polynomial ring \cite{A-Se2}, whereas there seems to remain the issue of presenting $J$ by generators and relations.


\subsection{Main result}

The aim of this paper is to describe the ring structure of $J$ by a basis and structure constants. 
To state the description precisely, we define, for an integer $\ell \ge 0$, a set $B^{(\ell)}$ by
$$
B^{(\ell)}
=
\left\{
\beta = (\beta_1, \beta_2, \cdots, \beta_\ell) \in \Z^\ell \big| \
\begin{array}{c}
\beta_1, \cdots, \beta_{\ell - 1} \ge 0, \ \beta_{\ell} \ge 1
\end{array}
\right\}, \ (\ell \ge 1),
$$
and $B^{(0)} = \{ \beta = (\emptyset) \}$. We also define $B^{(\ell)}(0)$ by the following subset in $B^{(\ell)}$:
\begin{align*}
B^{(0)}(0) &= B^{(0)}, &
B^{(1)}(0) &= \{ \beta = (1) \}, &
B^{(\ell)}(0) &= \{ \beta \in B^{(\ell)} |\ \beta_1 = 0 \}, \ (\ell > 1).
\end{align*}
For $\beta \in B^{(\ell)}$, $\beta' \in B^{(\ell')}$ and $\beta'' \in B^{(\ell + \ell')}$, we define the number $N_{\beta \beta'}^{\beta''}$ by the following expression of a polynomial:
\begin{multline*}
e_1(\vec{k}, \vec{k}')^{\beta_1''} \cdots 
e_{\ell+\ell'}(\vec{k}, \vec{k}')^{\beta''_{\ell+\ell'}} \\
=
\sum_{\beta \in B^{(\ell)}, \beta' \in B^{(\ell')}}
N_{\beta \beta'}^{\beta''}
e_1(\vec{k})^{\beta_1} \cdots e_\ell(\vec{k})^{\beta_\ell}
e_1(\vec{k}')^{\beta'_1} \cdots e_{\ell'}(\vec{k}')^{\beta'_{\ell'}},
\end{multline*}
where $e_i(\vec{k})$, $e_i(\vec{k}')$ and $e_i(\vec{k}, \vec{k}')$ mean the $i$-th elementary symmetric polynomials in $\{ k_1, \cdots, k_\ell \}$, $\{ k'_1, \cdots, k'_{\ell'}\}$ and $\{ k_1, \cdots, k_\ell, k'_1, \cdots, k'_{\ell'} \}$, respectively. Note that $N_{\beta \beta'}^{\beta''}$ are non-negative integers, because $e_i(\vec{k}, \vec{k}') = \sum_{j}e_j(\vec{k})e_{i-j}(\vec{k}')$.

\begin{thm} \label{thm:main}
The ring $J$ of the Atiyah-Segal invariant polynomials is isomorphic to the ring $J'$ constructed as follows: 
\begin{itemize}
\item[(a)]
its underlying vector space over $\C$ is generated by $\beta \in B(0) = \bigcup_{\ell \ge 0}B^{(\ell)}(0)$; 

\item[(b)]
the product of $\beta \in B^{(\ell)}(0)$ and $\beta' \in B^{(\ell')}(0)$ is given by
$$
\beta \cdot {\beta'} 
= \sum_{\beta'' \in B^{(\ell+\ell')}(0)}
N_{\beta \beta'}^{\beta''} {\beta''}.
$$
\end{itemize}
\end{thm}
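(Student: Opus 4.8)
The plan is to work directly inside $A = \C[x_1, x_2, \cdots]$ and produce an explicit $\C$-basis of $J = \Ker(d)$ indexed by $B(0)$, using a generating function in auxiliary indeterminates that are precisely the variables $k_a$ occurring in the definition of $N_{\beta\beta'}^{\beta''}$. Introduce the formal series
\[
\tilde{E}(k) = \sum_{i \ge 1} x_i\, k^{i} \in A[[k]].
\]
Because $dx_1 = 0$ and $dx_i = x_{i-1}$ for $i > 1$, a one-line computation gives the eigen-relation
\[
d\tilde{E}(k) = \sum_{i \ge 2} x_{i-1}\, k^{i} = k\,\tilde{E}(k).
\]
Setting $\Phi_\ell(\vec{k}) = \prod_{a=1}^{\ell}\tilde{E}(k_a)$ for $\ell \ge 1$ and $\Phi_0 = 1$, and using that $d$ is a derivation, this propagates to $d\Phi_\ell(\vec{k}) = \big(\sum_{a=1}^{\ell}k_a\big)\Phi_\ell(\vec{k}) = e_1(\vec{k})\,\Phi_\ell(\vec{k})$.

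Each $\Phi_\ell$ is symmetric in $k_1, \dots, k_\ell$ and divisible by $k_1\cdots k_\ell = e_\ell(\vec{k})$, hence expands uniquely as $\Phi_\ell(\vec{k}) = \sum_{\nu} F_\nu\, e^{\nu}(\vec{k})$, where $e^{\beta}(\vec{k}) = e_1(\vec{k})^{\beta_1}\cdots e_\ell(\vec{k})^{\beta_\ell}$, the coefficients $F_\nu$ lie in $A$, and $\nu$ ranges over $B^{(\ell)}$. Comparing coefficients in the eigen-relation yields $dF_\nu = F_{\nu - (1,0,\dots,0)}$, with the convention that a coefficient whose index leaves $B^{(\ell)}$ is $0$. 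Thus $F_\beta \in \Ker(d)$ exactly when its predecessor coefficient vanishes: for $\ell \ge 2$ this is the condition $\beta_1 = 0$, i.e. $\beta \in B^{(\ell)}(0)$; for $\ell = 1$ the only kernel coefficient is $F_{(1)} = x_1$, because its formal predecessor is the (vanishing) constant term of $\tilde{E}$. This boundary effect is precisely the special clause $B^{(1)}(0) = \{(1)\}$, and I would take $\{F_\beta\}_{\beta \in B(0)}$ as the candidate basis of $J$.

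The multiplication is then forced by the trivial identity $\Phi_\ell(\vec{k})\,\Phi_{\ell'}(\vec{k}') = \Phi_{\ell+\ell'}(\vec{k}, \vec{k}')$ obtained by merging the two disjoint families of indeterminates. Expanding the right-hand side through $e_i(\vec{k}, \vec{k}') = \sum_j e_j(\vec{k})e_{i-j}(\vec{k}')$ and matching the coefficient of $e^{\beta}(\vec{k})\,e^{\beta'}(\vec{k}')$ — monomials that are $\C$-linearly independent over $A$ — produces exactly $F_\beta\, F_{\beta'} = \sum_{\beta''} N_{\beta\beta'}^{\beta''}\, F_{\beta''}$ with the $N_{\beta\beta'}^{\beta''}$ of the theorem. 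To reach part (b) I must verify the closure claim that for $\beta, \beta' \in B(0)$ only $\beta'' \in B(0)$ contribute. When $\beta_1 = \beta'_1 = 0$ this is immediate: if $\beta''_1 \ge 1$ the factor $(e_1(\vec{k}) + e_1(\vec{k}'))^{\beta''_1}$ forces every monomial in $e^{\beta''}(\vec{k},\vec{k}')$ to carry positive $e_1(\vec{k})$- or $e_1(\vec{k}')$-degree, so it cannot equal $e^{\beta}(\vec{k})e^{\beta'}(\vec{k}')$; the finitely many cases involving the exceptional generator $x_1 = F_{(1)}$ I would settle by a direct one-variable computation, which shows the unique surviving $\beta''$ again has $\beta''_1 = 0$.

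Finally I would assemble the isomorphism. A dimension count gives $\dim J_n = |B(0)_n|$ in every degree: since the triangular system $d\big(x_{j+1}x_{\mu\setminus j}\big) = x_\mu + (\text{lower})$ shows $d\colon A_n \to A_{n-1}$ is surjective for $n \ge 2$, one has $\dim J_n = p(n) - p(n-1)$, which by the classical bijection equals the number of partitions of $n$ with no part $1$, i.e. $|B(0)_n|$, the degree-one discrepancy being absorbed by $x_1$. It then remains only to prove that the $F_\beta$ are linearly independent, after which the product identity above makes $\beta \mapsto F_\beta$ a ring isomorphism $J' \xrightarrow{\sim} J$. I expect this independence to be the main obstacle, since unlike the earlier steps it is not formal; I would attack it by expanding $\Phi_\ell = \sum_{\ell(\lambda) = \ell} x_\lambda\, m_\lambda(\vec{k})$ (sum over partitions $\lambda$ into exactly $\ell$ positive parts, $x_\lambda = x_{\lambda_1}\cdots x_{\lambda_\ell}$) and passing through the unitriangular transition between the monomial and elementary symmetric polynomials in the $k_a$, so that each $F_\beta$ acquires a well-defined leading monomial $x_\lambda$ in $A$; injectivity of the resulting index map $\beta \mapsto \lambda$, together with the dimension count, would yield both independence and spanning. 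As a cross-check this construction reproduces the small cases, e.g. $F_{(0,1)} = x_1^2$ and $F_{(0,2)} = x_2^2 - 2x_1x_3$ in degrees $2$ and $4$.
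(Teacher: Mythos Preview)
Your approach is correct and essentially identical to the paper's: your $\tilde{E}(k)$, $\Phi_\ell$, and $F_\beta$ are precisely the paper's $\ch(k\mid\vec{x})$, $\ch(k_1,\ldots,k_\ell\mid\vec{x})$, and $g_\beta(\vec{x})$, and the eigen-relation $d\Phi_\ell = e_1(\vec{k})\Phi_\ell$ together with the multiplicative identity $\Phi_\ell\Phi_{\ell'}=\Phi_{\ell+\ell'}$ are used in the same way. The only streamlining the paper gains is to note that the invertible transition matrix $(M_{\lambda\beta})$ already makes $\{g_\beta\mid\beta\in B_n^{(\ell)}\}$ a basis of \emph{all} of $A_n^{(\ell)}$, so that spanning of $J$ and the closure claim (only $\beta''\in B(0)$ contribute) are immediate from the basis description of $\Ker(d)$ and the fact that $J$ is a subring, rendering your separate dimension count and combinatorial closure argument unnecessary.
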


Denote by $g_\beta(\vec{x})$ the invariant polynomial corresponding to $\beta$. Then $g_{(\emptyset)}(\vec{x}) = 1$ and $g_{(1)} (\vec{x})= x_1$. Some explicit product formulae of the polynomials are:
\begin{align*}
g_{(1)} g_{(1)} &= g_{(0, 1)}, &
g_{(1)} g_{(0, \beta_2, \cdots, \beta_{\ell-1}, \beta_\ell)}
&=
g_{(0, \beta_2, \cdots, \beta_{\ell-1}, -1 + \beta_\ell, 1)},
\end{align*}
\begin{align*}
g_{(0, a)} g_{(0, b)}
&=
\sum_{1 \le r \le \frac{a + b}{2}}
\frac{(a + b - 2r)!}{(a - r)!(b - r)!}
g_{(0, a + b - 2r, 0, r)}, \\
g_{(0, a)} g_{(0, b, c)}
&=
\sum_{
\substack{
0 \le r \le \mathrm{min}\{a-s, b\} \\
1 \le s \le c \\
}}
\frac{(a + b - 2r - s)!}{(a - r - s)!(b - r)!}
g_{(0, a + b - 2r - s, c - s, r, s)}.
\end{align*}


\subsection{Application}

In \cite{A-Se2}, the Chern classes of twisted $K$-theory corresponding to $J$ live in ordinary cohomology, at the first stage. Then lifting these Chern classes to twisted cohomology is proposed, and the problem to find some natural basis is raised. We answer this problem by constructing a lift of the basis in Theorem \ref{thm:main}


\subsection{Toward a description of $J$ by generators and relations}

If we consider the polynomial algebra generated on $B(0)$ and take the quotient by the ideal generated by relations corresponding to (b) in Theorem \ref{thm:main}, then we get a description of $J$ by generators and relations, which is obviously unsatisfactory. For a satisfactory description, a possible direction would be to eliminate redundant bases. With respect to an order of monomials in $x_i$, the leading term of the invariant polynomial $g_\beta(\vec{x}) $ corresponding to $\beta = (\beta_1, \cdots, \beta_\ell) \in B^{(\ell)}(0)$ is the monomial $x_{\beta_\ell}x_{\beta_\ell + \beta_{\ell-1}} \cdots x_{\beta_\ell + \cdots + \beta_1}$, and its coefficient is always $1$. This fact leads us to introduce the subset of $\{ g_\beta |\ \beta \in B(0) \}$ consisting of bases $g_\beta(\vec{x})$ whose leading terms cannot be the products of the leading terms of other non-trivial bases of strictly lower degree. We can see that the bases in the subset generate the ring $J$ algebraically, and their first non-trivial relations appear in degree $12$:
\begin{align*}
g_{(0,2)}g_{(0,1,2)} - g_{(0,3)} g_{(0,0,2)}
&=
g_{(1)} g_{(0,0,1,2)} + g_{(1)}^2 g_{(0,2,2)}, \\
g_{(0,2)}^3 - g_{(0,0,2)}^2 
&=
3g_{(1)}^2 g_{(0,2)}g_{(0,3)} - 2g_{(1)}^3g_{(0,0,3)} - 3g_{(1)}^4g_{(0,4)}.
\end{align*}
However, by computing relations in higher degree, we can also see that the subset still contains many redundant bases to generate $J$ algebraically. The task to find out a minimal set of algebraic generators still seems not easy, and the presentation issue of $J$ needs further study.


\subsection{Plan of the paper}

The description of $J$ in Theorem \ref{thm:main} originates from a construction of characteristic classes for twisted $K$-theory based on the \textit{Chern character} and the \textit{Adams operations} \cite{A-Se2}. Section \ref{sec:motivating_construction} is devoted to this motivating construction. Though this section can be skipped for the proof of Theorem \ref{thm:main}, one with the understanding of the construction will find the definition of our basis natural. 
The proof of Theorem \ref{thm:main}, which is quite elementary, is given in Section \ref{sec:basis}. The point in the proof is the definition of the polynomials $g_\beta(\vec{x})$ associated to $\beta \in B = \bigcup_{\ell \ge 0} B^{(\ell)}$. With the additive basis formed by these polynomials, the structure constant of $A$ and the subring $J \subset A$ are easy to express. Our lifts of Chern classes are provided at the end of this section. 

Finally, a list of invariant polynomials $g_\beta(\vec{x})$ is appended for reference.


\subsection{Acknowledgment}

This work is supported by the Grants-in-Aid for Scientific Research (start-up 21840034), JSPS.


\section{Motivating construction}
\label{sec:motivating_construction}


\subsection{Chern character}

One tool for the construction motivating our basis is the \textit{Chern character} for twisted $K$-theory. For simplicity, we assume that $X$ is a smooth manifold. With some subtle technical points understood, the Chern character is a natural homomorphism
$$
\ch : \ K_\tau(X) \longrightarrow H^{\mathrm{even}}_\eta(X).
$$
Here $K_\tau(X)$ denotes the twisted $K$-group of $X$ with its twist $\tau$. Since the way to realize twists may vary according to the contexts, we just point out that a twist is a geometric object classified by the degree three integral cohomology group $H^3(X; \Z)$. For the sake of simplicity, we also assume that the twist $\tau$ is classified by an element in $H^3(X; \Z)$ of infinite order. The target of $\ch$ is the cohomology of the complex $(\Omega(X), d - \eta \wedge \cdot )$, where $\Omega(X)$ is the space of differential forms, and $\eta$ is a closed $3$-form whose de Rham cohomology class corresponds to the real image of the element in $H^3(X; \Z)$ classifying $\tau$.

A point to note is the following commutative diagram:
$$
\begin{CD}
K_\tau(X) \times K_{\tau'}(X) @>{\otimes}>> K_{\tau + \tau'}(X) \\
@V{\ch \times \ch}VV @VV{\ch}V \\
H_{\eta}(X) \times H_{\eta'}(X) @>>{\wedge}> H_{\eta + \eta'}(X),
\end{CD}
$$
where $\otimes$ is the multiplication in twisted $K$-theory, which mixes twists. 

From the Chern character, we can compute the Chern class corresponding to an invariant polynomial $f(x_1, x_2, \cdots) \in J$: Let $a \in K_\tau(X)$ be a twisted $K$-class. We express its Chern character as follows:
$$
\ch(a) = [x_1(a) + x_2(a) + x_3(a) + \cdots],
$$
where $x_n(a)$ is the $2n$-form part. (The $0$-form part $x_0(a)$ is absent, under the assumption on $\tau$.) The differential form $f(x_1(a), x_2(a), \cdots)$ is closed by the invariance of the polynomial $f$. If we denote the de Rham cohomology class of the differential form by $c_f(a)$, then we get the Chern class corresponding to $f$:
$$
c_f : \ K_\tau(X) \longrightarrow  H^{\mathrm{even}}(X).
$$


\subsection{Adams operation}

The other tool for our motivating construction is the \textit{Adams operation} for twisted $K$-theory \cite{A-Se2}, which is given as a natural map
$$
\psi^k : K_\tau(X) \longrightarrow K_{k\tau}(X).
$$
Here $k$ is allowed to be any integer. The twists in the source and the target of $\psi^k$ are generally different, because the formulation of $\psi^k$ involves the multiplication in twisted $K$-theory. It should be noticed that, under the expression $\ch(a) = [x_1(a) + x_2(a) + \cdots]$, we can express the Chern character of $\psi^k(a)$ as
$$
\ch(\psi^k(a)) = [k x_1(a) + k^2 x_2(a) + k^3 x_3(a) + \cdots].
$$


\subsection{Factory of Chern classes}

For integers $k_1, \cdots, k_\ell$, the Adams operations $\psi^{k_i}$ and the product in twisted $K$-theory construct
$$
\begin{array}{rcl}
\psi^{(k_1, \cdots, k_\ell)} : \
K_\tau(X) & \longrightarrow &
K_{(k_1 + \cdots + k_\ell)\tau}(X). \\
 & &  \\
a \ \quad & \mapsto & \psi^{k_1}(a) \otimes \cdots \otimes \psi^{k_\ell}(a)
\end{array}
$$
While Atiyah and Segal considered the case of $k_1 + \cdots + k_\ell =  1$ to get an ``internal'' operation, we consider the case of $k_1 + \cdots + k_\ell = 0$. The resulting ``external'' operation followed by the Chern character then gives
$$
\ch \circ \psi^{(k_1, \cdots, k_\ell)} : \
K_\tau(X) \longrightarrow H^{\mathrm{even}}(X).
$$
Since this map is natural, its $2n$-form part gives rise to a Chern class of twisted $K$-theory. It is natural to ask what kind of Chern classes are produced by this method: These Chern classes correspond to some polynomials in $J$. 

By means of the properties explained so far, we can compute examples readily. In the simplest case of $\ell = 2$, we have
\begin{align*}
\mathrm{ch}(\psi^{(k_1,k_2)}(a))
&=
(k_1k_2) \cdot x_1^2 \\
&+ 
(k_1k_2)^2 \cdot (x_2^2 - 2 x_1x_3) \\
&+
(k_1k_2)^3 \cdot (x_3^2 - 2 x_2x_4 + 2 x_1x_5) \\
&+
(k_1k_2)^4 \cdot 
(x_4^2 - 2 x_3x_5 + 2 x_2x_6 - 2x_1x_7) \\
&+
(k_1k_2)^5 \cdot (x_5^2 - 2 x_4x_6 + 2 x_3x_7 - 2x_2x_8 + 2 x_1x_9) \\
&+ 
\cdots, 
\end{align*}
where $x_i = x_i(a)$.
In the case of $\ell = 3$, we also have
\begin{align*}
\mathrm{ch}(\psi^{(k_1,k_2,k_3)}(a))
&=
e_3 \cdot  x_1^3 \\
&+
e_2e_3  \cdot x_1(x_2^2 - 2x_1x_3) \\
&+
e_3^2 \cdot  (x_2^3 - 3 x_1x_2x_3 + 3 x_1^2x_4) \\
&+
e_2^2 e_3 \cdot x_1(x_3^2 - 2x_2x_4 + 2x_1x_5) \\
&+
e_2e_3^2 \cdot 
(x_2x_3^2 - 2 x_2^2x_4 - x_1x_3x_4 + 5 x_1x_2x_5 - 5 x_1^2x_6) \\
&+
e_3^3 \cdot 
(x_3^3 - 3 x_2x_3x_4 + 3 x_2^2x_5 \\
& \quad \quad \quad \quad 
+ 3 x_1x_4^2 - 3 x_1x_3x_5 - 3 x_1x_2x_6 + 3 x_1^2x_7) \\
&\quad + 
e_2^3e_3 \cdot 
x_1(x_4^2 - 2 x_3x_5 + 2 x_2x_6 - 2 x_1x_7) \\
&+ 
\cdots,
\end{align*}
where $e_2 = k_1k_2 + k_2k_3 + k_1k_3$ and  $e_3 = k_1k_2k_3$ for short.

With some experience of calculating polynomials in $J$, one will find that each coefficient of a product of elementary symmetric polynomials in $k_1, \cdots, k_\ell$ is an invariant polynomial. Further, one may guess that the invariant polynomials arising in this way constitute an additive basis of a subspace in $J$: This turns out to be the case, as a result of Theorem \ref{thm:main}. In the next section, we consider $\ch \circ \psi^{(k_1, \cdots, k_\ell)}$ in purely algebraic setting to construct our basis of $J$.


\section{A basis of invariant polynomials}
\label{sec:basis}


\subsection{Preliminary}

We define $\ch(k | \vec{x})$ to be the following formal power series in variables $x_1, x_2, \cdots$ and $k$:
$$
\ch(k | \vec{x}) =
\sum_{i \ge 1} k^i x_i =
k x_1 + k^2 x_2 + k^3 x_3 + \cdots.
$$
For an integer $\ell \ge 1$, we let $\ch(k_1, \cdots, k_\ell | \vec{x})$ be the following formal power series in the variables $x_1, x_2, \cdots$ and $k_1, \cdots, k_\ell$:
$$
\ch(k_1, \cdots, k_\ell | \vec{x})
=
\ch(k_1 | \vec{x}) \cdots \ch(k_\ell | \vec{x}) 
=
\sum_{i_1, \cdots, i_\ell \ge 1} 
k_1^{i_1} \cdots k_r^{i_\ell} x_{i_1} \cdots x_{i_\ell}.
$$
We write $\ch(k_1, \cdots, k_\ell | \vec{x})_n$ for the degree $n$ part of $\ch(k_1, \cdots, k_\ell | \vec{x})$ with respect to $x_i$. By construction, $\ch(k_1, \cdots, k_\ell | \vec{x})_n$ is a symmetric polynomial in $k_1, \cdots k_\ell$ with its coefficients in $A$. In particular, the symmetric polynomial is of degree $n$, provided that each $k_i$ is given degree $1$.

As is well-known \cite{M}, the ring of symmetric polynomials in $k_1, \cdots, k_\ell$ is the polynomial ring in the elementary symmetric polynomials $e_1, \cdots, e_\ell$:
$$
e_j(\vec{k}) = e_j(k_1, \cdots, k_\ell)
=
\sum_{1 \le i_1 < \cdots < i_j \le \ell}
k_{i_1} \cdots k_{i_j}.
$$
For integers $n$ and $\ell$ such that $n \ge \ell \ge 1$, we put
$$
B_{n}^{(\ell)}
=
\left\{
\beta = (\beta_1, \beta_2, \cdots, \beta_\ell) \in \Z^\ell \bigg| \
\begin{array}{c}
\beta_1, \cdots, \beta_{\ell-1} \ge 0, \ \beta_{\ell} \ge 1 \\
\beta_1 + 2 \beta_2 + \cdots + \ell \beta_{\ell} = n
\end{array}
\right\}.
$$
We set $B^{(0)}_0 = B^{(0)}$ and $B^{(0)}_n = \emptyset$ for $n \ge 1$. Then $B^{(\ell)}$ in Section \ref{sec:introduction} is $B^{(\ell)} = \bigcup_{n \ge \ell} B_{n}^{(\ell)}$. To an element $\beta = (\beta_1, \cdots, \beta_\ell) \in B_n^{(\ell)}$, we associate the product of elementary symmetric polynomials $e^\beta(\vec{k}) = e_1(\vec{k})^{\beta_1} \cdots e_\ell(\vec{k})^{\beta_\ell}$. These products form a basis of the vector space of symmetric polynomials of degree $n$ in $k_1, \cdots, k_\ell$.

\begin{dfn}
Let $n$ and $\ell$ be integers such that $n \ge \ell \ge 1$. For $\beta \in B_n^{(\ell)}$, we define a polynomial $g_\beta(\vec{x}) \in A$ of degree $n$ by the following formula:
$$
\ch(k_1, \cdots, k_\ell | \vec{x})_n
=
\sum_{\substack{i_1, \cdots, i_r \ge 1 \\ i_1 + \cdots + i_\ell = n}}
k_1^{i_1} \cdots k_\ell^{i_\ell}
x_{i_1} \cdots x_{i_\ell}
=
\sum_{\beta \in B_n^{(\ell)}}
e^\beta(k_1, \cdots, k_\ell) g_\beta(\vec{x}).
$$
For $\beta = (\emptyset) \in B^{(0)}$, we define $g_\beta(\vec{x}) = 1$.
\end{dfn}

Another equivalent definition of $g_\beta(\vec{x})$ makes use of the transition matrix $M = M(m, e)$ from elementary symmetric polynomials to monomial symmetric polynomials \cite{M}: let $\Lambda_n^{(\ell)}$ be the set of partitions of $n$ of length $\ell$:
$$
\Lambda_n^{(\ell)}
=
\left\{
\lambda = (\lambda_1, \cdots, \lambda_\ell) \in \Z^\ell | \
\lambda_1 \ge \cdots \ge \lambda_\ell \ge 1, \ 
\lambda_1 + \cdots + \lambda_\ell = n
\right\},
$$
which can be identified with $B_n^{(\ell)}$ through the change of expression:
$$
\beta = (\beta_1, \cdots, \beta_\ell) \leftrightarrow
\lambda =
(
\overbrace{\ell, \cdots, \ell}^{\beta_\ell},
\cdots,
\overbrace{2, \cdots, 2}^{\beta_2},
\overbrace{1, \cdots, 1}^{\beta_1}
).
$$
For $\lambda \in \Lambda_n^{(\ell)}$, the \textit{monomial symmetric polynomial} $m_\lambda(\vec{k}) = m_\lambda(k_1, \cdots, k_\ell)$ is the polynomial $\sum k_1^{\mu_1} \cdots k_\ell^{\mu_\ell}$ summed over all distinct permutations $(\mu_1, \cdots, \mu_\ell)$ of $\lambda$. They also form a basis of the space of symmetric polynomials of degree $n$ in $k_i$. Let $M_{\lambda \beta}$ be the transition matrix given by the base change $m_\lambda = \sum_\beta M_{\lambda \beta} e^\beta$. Because of the expression
$$
\ch(k_1, \cdots, k_\ell | \vec{x})_n
=
\sum_{\lambda \in \Lambda_n^{(\ell)}}
m_\lambda(k_1, \cdots, k_\ell) x_\lambda
=
\sum_{\beta \in B_n^{(\ell)}}
e^\beta(k_1, \cdots, k_\ell) g_\beta(\vec{x}),
$$
the other definition of $g_\beta(\vec{x})$ is
$$
g_\beta(\vec{x}) = 
\sum_{\lambda \in \Lambda_n^{(\ell)}}
M_{\lambda \beta} x_\lambda,
$$
where $x_\lambda = x_{\lambda_1} \cdots x_{\lambda_\ell}$ for $\lambda = (\lambda_1, \cdots, \lambda_\ell)$.

\begin{rem}
Since $M_{\lambda \beta} \in \Z$, the latter definition shows $g_\beta(\vec{x}) \in \Z[x_1, x_2, \cdots ] \subset A$.
\end{rem}

\begin{rem}
The transition matrix $(M_{\lambda \beta})_{\lambda \beta}$ can be computed from the \textit{Kostka matrix}. Some facts about the matrix in \cite{M} imply the expression:
$$
g_\beta(\vec{x}) = 
x_{\beta'}
+ \sum_{\substack{\lambda \in \Lambda_n^{(\ell)} \\ \beta' < \lambda}} 
M_{\lambda \beta} x_\lambda.
$$
Here $\beta' \in \Lambda_n$ is the \textit{conjugate} of the partition $\beta$. The meaning of $\beta' < \lambda$ is that $\beta' \neq \lambda$ and $\beta' \le \lambda$, where $\le$ is the \textit{natural (partial) ordering} \cite{M}.
\end{rem}

\begin{rem}
For $n \ge \ell$, let $\omega \in \Lambda_n^{(\ell)}$ be $\omega = (n - \ell + 1, \overbrace{1, \cdots, 1}^{\ell-1})$, which satisfies $\lambda \le \omega$ for all $\lambda \in \Lambda_n^{(\ell)}$. For any $\beta = (\beta_1, \beta_2, \cdots, \beta_\ell) \in B_n^{(\ell)}$, the coefficient $M_{\omega\beta}$ of $x_\omega$ in $g_\beta(\vec{x})$ never vanish: If $n = \ell$, then $M_{\omega\beta} = 1$. If $n > \ell$, then we obtain
$$
M_{\omega \beta}
=
(-1)^{\ell + 1 + \sum_{i}\beta_{2i}}
\frac{n - \ell}{(\sum_i \beta_i) - 1}
\frac{((\sum_i \beta_i) - 1)!}{\prod_i (\beta_i!)} \beta_\ell,
$$
by using so-called \textit{Waring's formula}, an explicit formula expressing the power sum in terms of the elementary symmetric functions (page 33, Example 20, \cite{M}).
\end{rem}


\subsection{Proof of Theorem \ref{thm:main}}

We denote by $A_n^{(\ell)} \subset A$ the subspace consisting of polynomials of degree $n$ in $x_1, x_2, \cdots$ which are linear combinations of monomials $x_{i_1} \cdots x_{i_\ell}$ of length $\ell$. We set $A_n = \bigoplus_\ell A^{(\ell)}_n$ and $A^{(\ell)} = \bigoplus_n A^{(\ell)}_n$. By construction, the polynomial $g_\beta(\vec{x})$ with $\beta \in B_n^{(\ell)}$ belongs to the subspace $A_n^{(\ell)}$.

\begin{lem} \label{lem:polynomial_ring}
The following holds about the polynomial ring $A = \C[x_1, x_2, \cdots]$:
\begin{itemize}
\item[(a)]
For $n \ge \ell \ge 0$, the set $\{ g_\beta(\vec{x}) |\ \beta \in B_n^{(\ell)} \}$ is a basis of $A_n^{(\ell)}$.

\item[(b)]
For $\beta \in B^{(\ell)}$ and $\beta' \in B^{(\ell')}$, the product of the polynomials $g_\beta(\vec{x})$ and $g_{\beta'}(\vec{x})$ is expresses as 
$$
g_\beta(\vec{x}) g_{\beta'}(\vec{x})
= \sum_{\beta'' \in B^{(\ell+\ell')}}
N_{\beta \beta'}^{\beta''} g_{\beta''}(\vec{x}),
$$
where $N_{\beta \beta'}^{\beta''}$ is the non-negative integer introduced in Section \ref{sec:introduction}.
\end{itemize}
\end{lem}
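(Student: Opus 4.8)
The plan is to derive both statements directly from the generating-function definition of $g_\beta(\vec{x})$, exploiting the multiplicativity $\ch(\vec{k}|\vec{x})\ch(\vec{k}'|\vec{x}) = \ch(\vec{k},\vec{k}'|\vec{x})$ together with the standard fact that elementary and monomial symmetric polynomials each form a basis of the symmetric polynomials. Part (a) amounts to a single change of basis, while part (b) is a coefficient comparison after expanding this product identity.

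For (a), I would first note that the monomials $x_\lambda = x_{\lambda_1}\cdots x_{\lambda_\ell}$, indexed by $\lambda \in \Lambda_n^{(\ell)}$, form the evident $\C$-basis of $A_n^{(\ell)}$, so that $\dim_\C A_n^{(\ell)} = |\Lambda_n^{(\ell)}| = |B_n^{(\ell)}|$. The second definition of $g_\beta$ reads
$$
g_\beta(\vec{x}) = \sum_{\lambda \in \Lambda_n^{(\ell)}} M_{\lambda\beta}\, x_\lambda,
$$
so the coordinate vector of $g_\beta$ relative to $\{x_\lambda\}$ is precisely the $\beta$-th column of the transition matrix $M = (M_{\lambda\beta})$. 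Since $\{e^\beta(\vec{k})\}_{\beta \in B_n^{(\ell)}}$ and $\{m_\lambda(\vec{k})\}_{\lambda \in \Lambda_n^{(\ell)}}$ are both $\C$-bases of the space of degree-$n$ symmetric polynomials in $k_1, \cdots, k_\ell$, the matrix $M$ is invertible. Hence its columns are linearly independent and equal in number to $\dim_\C A_n^{(\ell)}$, so the $g_\beta$ form a basis. The degenerate case $\ell = 0$ is immediate from $g_{(\emptyset)} = 1$.

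For (b), I would start from the identity, immediate from $\ch(\vec{k}|\vec{x}) = \ch(k_1|\vec{x})\cdots\ch(k_\ell|\vec{x})$,
$$
\ch(k_1,\cdots,k_\ell|\vec{x})\,\ch(k'_1,\cdots,k'_{\ell'}|\vec{x}) = \ch(k_1,\cdots,k_\ell,k'_1,\cdots,k'_{\ell'}|\vec{x}).
$$
Using the defining expansion of the $g$'s and $B^{(\ell)} = \bigcup_m B_m^{(\ell)}$, the left side becomes $\sum_{\beta,\beta'} e^\beta(\vec{k})\,e^{\beta'}(\vec{k}')\,g_\beta(\vec{x})g_{\beta'}(\vec{x})$, while the right side is $\sum_{\beta''} e^{\beta''}(\vec{k},\vec{k}')\,g_{\beta''}(\vec{x})$. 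Substituting the definition of $N_{\beta\beta'}^{\beta''}$, which rewrites $e^{\beta''}(\vec{k},\vec{k}')$ as $\sum_{\beta,\beta'} N_{\beta\beta'}^{\beta''} e^\beta(\vec{k})e^{\beta'}(\vec{k}')$, turns the right side into $\sum_{\beta,\beta'} e^\beta(\vec{k})e^{\beta'}(\vec{k}')\big(\sum_{\beta''} N_{\beta\beta'}^{\beta''} g_{\beta''}(\vec{x})\big)$, the inner sum being finite since $N_{\beta\beta'}^{\beta''}$ vanishes unless $\deg\beta'' = \deg\beta + \deg\beta'$. As the products $\{e^\beta(\vec{k})\,e^{\beta'}(\vec{k}')\}$ form a basis of the tensor product of the two rings of symmetric polynomials (being products of basis elements of each factor), comparing the coefficient of each $e^\beta(\vec{k})e^{\beta'}(\vec{k}')$ yields the asserted product formula.

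The argument is entirely elementary, and I do not expect a genuine obstacle. The only points demanding care are bookkeeping and two standard inputs: every monomial of $\ch(\vec{k}|\vec{x})$ has length exactly $\ell$, so lengths and degrees add correctly under multiplication, matching the index set $B^{(\ell+\ell')}$; and the invertibility of $M$ in (a), together with the linear independence of the products $e^\beta(\vec{k})e^{\beta'}(\vec{k}')$ in (b), both of which follow from the fact that the $e^\beta$ form a basis of the symmetric polynomials. The real substance is simply the recognition that the whole statement is encoded in the single multiplicative identity displayed above.
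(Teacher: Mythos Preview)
Your proposal is correct and follows essentially the same route as the paper's own proof: part (a) via the invertibility of the transition matrix $M$ between the $x_\lambda$ and the $g_\beta$, and part (b) via the multiplicative identity $\ch(\vec{k}|\vec{x})\,\ch(\vec{k}'|\vec{x}) = \ch(\vec{k},\vec{k}'|\vec{x})$ together with the definitions of $g_\beta$ and $N_{\beta\beta'}^{\beta''}$. Your write-up is simply more explicit about the coefficient comparison and the linear independence of the $e^\beta(\vec{k})e^{\beta'}(\vec{k}')$, which the paper leaves implicit.
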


\begin{proof}
For (a), the set $\{ x_\lambda |\ \lambda \in \Lambda_n^{(\ell)} \}$ clearly provides a basis of $A_n^{(\ell)}$. Since the transition matrix $( M_{\lambda \beta} )$ is invertible, $\{ g_\beta(\vec{x}) |\ \beta \in B_n^{(\ell)} \}$ gives rise to a basis of $A_n^{(\ell)}$ as well. Then, (b) follows from the obvious formula
$$
\ch(k_1, \cdots, k_\ell | \vec{x}) \ch(k'_1, \cdots, k'_{\ell'} | \vec{x})
=
\ch(k_1, \cdots, k_\ell, k'_1, \cdots, k'_{\ell'} | \vec{x}),
$$
together with the definition of $g_\beta(\vec{x})$ and that of $N_{\beta \beta'}^{\beta''}$.
\end{proof}

For $i \ge 0$ and $n \ge \ell > 1$, we define $B_n^{(\ell)}(i)$ to be the following subset in $B_n^{(\ell)}$:
$$
B_n^{(\ell)}(i)
=
\left\{
\beta = (\beta_1, \beta_2, \cdots, \beta_\ell) \in \Z^\ell \bigg| \
\begin{array}{c}
\beta_1 = i, \ \beta_2, \cdots, \beta_{\ell-1} \ge 0, \ \beta_{\ell} \ge 1 \\
\beta_1 + 2 \beta_2 + \cdots + \ell \beta_{\ell} = n
\end{array}
\right\}.
$$
In the case of $\ell = 0$ and $\ell = 1$, we also define $B_n^{(\ell)}(i)$ by
\begin{align*}
B_n^{(0)}(i) &= 
\left\{
\begin{array}{cc}
B^{(0)}, & (n = i = 0), \\
\emptyset, & \mbox{otherwise},
\end{array}
\right. &
B_n^{(1)}(i) &= 
\left\{
\begin{array}{cc}
\{ \beta = (i+1) \}, & (n = i+1), \\
\emptyset, & (n \neq i+1).
\end{array}
\right. 
\end{align*}

\begin{lem} \label{lem:derivation}
For any $n \ge \ell \ge 1$ and $\beta = (\beta_1, \cdots, \beta_\ell) \in B_n^{(\ell)}(i)$, we have:
$$
d g_{(\beta_1, \cdots, \beta_\ell)}(\vec{x}) = 
\left\{
\begin{array}{cl}
0, & (i = 0), \\
g_{(\beta_1 - 1, \beta_2, \cdots, \beta_\ell)}(\vec{x}), & (i > 0).
\end{array}
\right.
$$
\end{lem}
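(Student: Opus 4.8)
The plan is to apply $d$ directly to the generating series and to exploit that the one-variable building blocks $\ch(k|\vec{x})$ are eigenvectors of $d$. First I would compute termwise
\[
d\,\ch(k|\vec{x}) = \sum_{i \ge 2} k^i x_{i-1} = k\sum_{j \ge 1} k^j x_j = k\cdot \ch(k|\vec{x}),
\]
so that $\ch(k|\vec{x})$ is an eigenvector of $d$ with eigenvalue $k$. Since $d$ is a derivation and $\ch(k_1,\dots,k_\ell|\vec{x}) = \prod_{j=1}^\ell \ch(k_j|\vec{x})$, the Leibniz rule then gives at once
\[
d\,\ch(k_1,\dots,k_\ell|\vec{x}) = \Big(\sum_{j=1}^\ell k_j\Big)\ch(k_1,\dots,k_\ell|\vec{x}) = e_1(\vec{k})\,\ch(k_1,\dots,k_\ell|\vec{x}).
\]

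Next I would isolate homogeneous components in the $x$-variables. As $dx_i = x_{i-1}$ lowers $x$-degree by one while $e_1(\vec{k})$ is a scalar for $d$, comparing the $x$-degree $(n-1)$ parts of the two sides yields
\[
d\big(\ch(k_1,\dots,k_\ell|\vec{x})_n\big) = e_1(\vec{k})\,\ch(k_1,\dots,k_\ell|\vec{x})_{n-1}.
\]
I would now substitute the defining expansions $\ch(\cdots)_n = \sum_{\beta \in B_n^{(\ell)}} e^\beta(\vec{k})\,g_\beta(\vec{x})$ and the analogous one in degree $n-1$. Because $d$ acts only on the $x$-variables, the left side becomes $\sum_{\beta \in B_n^{(\ell)}} e^\beta(\vec{k})\,dg_\beta(\vec{x})$; on the right side the elementary identity $e_1(\vec{k})\,e^\gamma(\vec{k}) = e^{\gamma + (1,0,\dots,0)}(\vec{k})$ converts multiplication by $e_1$ into the shift $\gamma \mapsto (\gamma_1+1,\gamma_2,\dots,\gamma_\ell)$ of the first index.

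The crux is then a reindexing check: the shift $\gamma \mapsto (\gamma_1+1,\gamma_2,\dots,\gamma_\ell)$ maps $B_{n-1}^{(\ell)}$ bijectively onto $\{\beta \in B_n^{(\ell)} : (\beta_1 - 1,\beta_2,\dots,\beta_\ell) \in B_{n-1}^{(\ell)}\} = \bigcup_{i \ge 1} B_n^{(\ell)}(i)$, its complement in $B_n^{(\ell)}$ being exactly $B_n^{(\ell)}(0)$. Since $\{e^\beta(\vec{k}) : \beta \in B_n^{(\ell)}\}$ is linearly independent, comparing coefficients of $e^\beta(\vec{k})$ gives the claim: for $\beta \in B_n^{(\ell)}(0)$ no matching term appears on the right, so $dg_\beta = 0$, while for $\beta \in B_n^{(\ell)}(i)$ with $i \ge 1$ the matching term is $g_{(\beta_1-1,\beta_2,\dots,\beta_\ell)}(\vec{x})$.

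I expect no serious obstacle; the computation is elementary, and the only points needing care are the degree bookkeeping (separating the $x$-degree from the $k$-degree when extracting components) and the boundary behaviour of the reindexing. In particular, for $\ell \ge 2$ the image of the shift is $\{\beta_1 \ge 1\}$, whereas for $\ell = 1$ the constraint $\beta_\ell \ge 1$ applies to the single coordinate, so the image is $\{\beta_1 \ge 2\}$; both are recorded uniformly by the convention $\beta = (i+1)$ defining $B_n^{(1)}(i)$, for which $g_{(m)}(\vec{x}) = x_m$ and the statement collapses to the defining rule $dx_m = x_{m-1}$, $dx_1 = 0$. The degenerate case $n = \ell$ is covered automatically, since then $\ch(\cdots)_{n-1} = 0$ and every $\beta \in B_\ell^{(\ell)} = \{(0,\dots,0,1)\}$ lies in $B_\ell^{(\ell)}(0)$, consistent with $g_{(0,\dots,0,1)} = x_1^\ell$ and $dx_1^\ell = 0$.
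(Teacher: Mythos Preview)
Your argument is correct and follows exactly the paper's route: you derive $d\,\ch(k_1,\dots,k_\ell|\vec{x})_n = e_1(\vec{k})\,\ch(k_1,\dots,k_\ell|\vec{x})_{n-1}$ and then read off the lemma from the definition of $g_\beta$, which is precisely what the paper does (in two lines). Your write-up simply spells out the eigenvector observation, the Leibniz rule, the coefficient comparison, and the boundary cases that the paper leaves implicit.
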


\begin{proof}
By the defining formula of $\ch(k_1, \cdots, k_\ell | \vec{x})_n$, we have
$$
d \ch(k_1, \cdots, k_\ell | \vec{x})_n 
= e_1(k_1, \cdots, k_\ell) \ch(k_1, \cdots, k_\ell | \vec{x})_{n-1}.
$$
This formula and the definition of $g_\beta(\vec{x})$ establish the lemma.
\end{proof}


\begin{proof}[The proof of Theorem \ref{thm:main}] 
By Lemma \ref{lem:polynomial_ring} and \ref{lem:derivation}, the subspace $J = \mathrm{Ker}(d) \subset A$ of invariant polynomials has the additive basis $\{ g_\beta(\vec{x}) |\ \beta \in B(0) \}$. Since $J \subset A$ is also a subring, we use Lemma \ref{lem:polynomial_ring} again to see that: for $\beta \in B^{(\ell)}(0)$ and $\beta' \in B^{(\ell')}(0)$, the product of the polynomials $g_\beta(\vec{x})$ and $g_{\beta'}(\vec{x})$ is expresses as 
$$
g_\beta(\vec{x}) g_{\beta'}(\vec{x})
= \sum_{\beta'' \in B^{(\ell+\ell')}}
N_{\beta \beta'}^{\beta''} g_{\beta''}(\vec{x})
= \sum_{\beta'' \in B^{(\ell+\ell')}(0)}
N_{\beta \beta'}^{\beta''} g_{\beta''}(\vec{x}).
$$
Thus, $\beta \mapsto g_\beta(\vec{x})$ induces a ring isomorphism $J' \cong J$.
\end{proof}


\subsection{Lifts to twisted cohomology}

In \cite{A-Se2}, the Chern class corresponding to $f \in J$ of positive degree is, at the first stage, constructed as a natural map 
$$
c_f : \ K_\tau(X) \longrightarrow H^*(X).
$$
Then, at the second stage, $c_f$ is lifted to the twisted cohomology
$$
C_f : \ K_\tau(X) \longrightarrow H_\eta^*(X)
$$
so that its leading term agrees with $c_f$. In the universal setting, such a lift is in one to one correspondence with a formal power series in $x_1, x_2, \cdots$:
$$
F(x_1, x_2, \cdots)
=
f(x_1, x_2, \cdots) + \mbox{higher degree term}
$$
satisfying $dF = F$. Clearly, a polynomial $f \in J$ admits various lifts $F$. A way to construct a lift \cite{A-Se2} is as follows: let $\delta : A \to A$ be the derivation defined by $\delta x_i = i x_{i+1}$. Then any $f \in J_n$ has the following series as its lift:
$$
\left(\exp \frac{\delta}{n} \right)f
=
f + \frac{1}{n} \delta f + \frac{1}{2 n^2} \delta^2f + \cdots +
\frac{1}{(i!) n^i} \delta^i f + \cdots.
$$
Using the basis $\{ g_\beta(\vec{x}) \}$, we provide another way to construct a lift:

\begin{prop}
Let $n$ and $\ell$ be such that $n \ge \ell \ge 1$. For $\beta = (\beta_1, \beta_2, \cdots, \beta_\ell) \in B^{(\ell)}_n(0)$, the polynomial $g_\beta(\vec{x}) \in J_n^{(\ell)}$ has the following series as its lift:
$$
\tilde{g}_\beta(\vec{x})
=
\sum_{i \ge 0} g_{\beta(i)}(\vec{x})
=
g_\beta(\vec{x}) + g_{\beta(1)}(\vec{x}) + g_{\beta(2)}(\vec{x}) + \cdots,
$$
where $\beta(i) = (\beta_1 + i, \beta_2, \cdots, \beta_\ell) \in B^{(\ell)}_{n+i}(i)$ for $i \ge 0$.
\end{prop}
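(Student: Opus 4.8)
The plan is to verify directly that the formal power series $\tilde{g}_\beta(\vec{x})$ satisfies the two defining conditions of a lift stated above, namely that its leading (lowest-degree) term equals $g_\beta(\vec{x})$ and that it obeys $d\tilde{g}_\beta = \tilde{g}_\beta$. Both will drop out of Lemma \ref{lem:derivation} once the indices have been bookkept.

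First I would confirm that each summand sits in the expected set. Since $\beta \in B_n^{(\ell)}(0)$ has $\beta_1 = 0$, the tuple $\beta(i) = (i, \beta_2, \ldots, \beta_\ell)$ has first coordinate $i$ and total weight $i + 2\beta_2 + \cdots + \ell\beta_\ell = n + i$, so indeed $\beta(i) \in B_{n+i}^{(\ell)}(i)$. In particular $\tilde{g}_\beta(\vec{x})$ is a genuine formal power series whose degree $n+i$ homogeneous part is exactly $g_{\beta(i)}(\vec{x})$, and its lowest-degree part, occurring at $i = 0$, is $g_{\beta(0)}(\vec{x}) = g_\beta(\vec{x})$. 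This settles the leading-term condition and shows that $\tilde{g}_\beta$ is a lift of $g_\beta$ in the required sense.

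For the differential equation, I would apply Lemma \ref{lem:derivation} term by term. When $i = 0$ the first coordinate of $\beta(0) = \beta$ is $0$, so $d g_\beta = 0$; when $i \ge 1$ the first coordinate $i$ of $\beta(i)$ is positive, so $d g_{\beta(i)} = g_{(i-1, \beta_2, \ldots, \beta_\ell)} = g_{\beta(i-1)}$. Summing over all terms and reindexing gives
$$
d\tilde{g}_\beta = \sum_{i \ge 1} g_{\beta(i-1)}(\vec{x}) = \sum_{j \ge 0} g_{\beta(j)}(\vec{x}) = \tilde{g}_\beta,
$$
which is precisely the condition $dF = F$.

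There is essentially no obstacle here beyond the index bookkeeping: the entire content is carried by Lemma \ref{lem:derivation}, and the only point demanding a little care is the membership claim $\beta(i) \in B_{n+i}^{(\ell)}(i)$, which is what guarantees that the correct branch of the derivation formula ($i = 0$ versus $i > 0$) is applied to each homogeneous piece.
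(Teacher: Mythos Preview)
Your proof is correct and follows exactly the paper's approach, which records the result in one line as an immediate consequence of Lemma~\ref{lem:derivation}; you have simply unpacked that line. One minor slip: the assertion ``$\beta_1 = 0$'' fails when $\ell = 1$ (there $B^{(1)}(0) = \{(1)\}$, so $\beta_1 = 1$), but the membership $\beta(i) \in B_{n+i}^{(\ell)}(i)$ still holds in that case by the special definition of $B_n^{(1)}(i)$, and the remainder of your argument is unaffected.
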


\begin{proof}
This is an immediate consequence of Lemma \ref{lem:derivation}.
\end{proof}

By Theorem  \ref{thm:main}, the lifts $\tilde{g}_\beta(\vec{x})$ form a basis of the universal Chern classes in twisted cohomology, which answers the problem to find a basis \cite{A-Se2}.

\medskip

For $g_{(1)}(\vec{x}) = x_1 \in J_1$, our lift agrees with that of Atiyah and Segal:
$$
(\exp \delta)(g_{(1)}) = \tilde{g}_{(1)}(\vec{x})
=
\ch(1 | \vec{x})
= x_1 + x_2 + x_3 + \cdots,
$$
but not in general, as is seen in the case of $g_{(0, 1)}(\vec{x}) = x_1^2 \in J_2^{(2)}$:
\begin{align*}
\left( \exp \frac{\delta}{2} \right)x_1^2 &=
x_1^2 + x_1x_2 + \frac{1}{4}(x_2^2 + 2x_1x_3) 
+ \frac{1}{8}(x_2x_3 + x_1x_4)
+ \cdots, \\
\tilde{g}_{(0, 1)}(\vec{x}) &=
x_1^2 + x_1x_2 + x_1x_3 + x_1x_4 + \cdots.
\end{align*}

\medskip

A motivation of Atiyah and Segal to introduce Adams operations is to construct a characteristic class of twisted $K$-theory living in twisted cohomology: If $k_1, \cdots, k_\ell$ are integers such that $k_1 + \cdots + k_\ell = 1$, then the Chern character and the Adams operations combine to give a characteristic class
$$
\ch \circ \psi^{(k_1, \cdots, k_\ell)} : \ 
K_\tau(X) \longrightarrow H^*_\eta(X).
$$
In the universal setting, this corresponds to the series $\ch(k_1, \cdots, k_\ell | \vec{x})$, where $k_1, \cdots, k_\ell$ are now regarded as  numbers, rather than formal variables. With respect to our basis, we can express the series as:
$$
\ch(k_1, \cdots, k_\ell | \vec{x}) = \sum_{\beta \in B^{(\ell)}(0)}
e^\beta(k_1, \cdots, k_\ell) \tilde{g}_\beta(\vec{x}).
$$




\appendix

\section{Lists}

\subsection{Poincar\'e polynomial}

The generating function $\mathcal{J}(u,t) = \sum_{n, \ell}\mathrm{dim}J_n^{(\ell)}u^\ell t^n$ for the dimension of $J_n^{(\ell)} = A_n^{(\ell)} \cap J$ has the following formula:

$$
\mathcal{J}(u,t) 
= \frac{1-t}{(1-ut)(1-ut^2)(1-ut^3)(1-ut^4) \cdots} + t.
$$
If we substitute $u = 1$, we get the formula of the generating function $\mathcal{J}(t) = \sum_n(\mathrm{dim}J_n)t^n$ for the dimension of $J_n$ in \cite{A-Se2}:
\begin{align*}
\mathcal{J}(t)
&=
\frac{1}{(1-t^2)(1-t^3)(1-t^4) \cdots} + t \\
&=
1 + t + t^2 + t^3 + 2t^4 + 2t^5 + 4t^6 + 4t^7 + 7t^8 + 8t^9 + 12t^{10} \\
& \quad 
+ 14t^{11} + 21t^{12} + 24t^{13} + 34t^{14} 
+ 41t^{15} + 55t^{16} + 66t^{17} + 88 t^{18} \\
& \quad 
+ 105 t^{19} + 137 t^{20} + 165 t^{21} 
+ 210 t^{22} + 235 t^{23} + 320 t^{24} + \cdots.
\end{align*}
For $\ell \ge 0$, the generating function $\mathcal{J}^{(\ell)}(t) = \sum_n (\mathrm{dim} J_n^{(\ell)}) t^n$ is
\begin{align*}
\mathcal{J}^{(0)}(t) &= 1, &
\mathcal{J}^{(1)}(t) &= t, &
\mathcal{J}^{(\ell)}(t)
&=
\frac{t^\ell}{(1-t^2) (1 - t^3) \cdots (1- t^\ell)}. \ (\ell \ge 2).
\end{align*}
A calculation gives the table:

\begin{center}
\begin{tabular}{c|cccccccccccccccc|c}
$\ell$ &
1 & 2 & 3 & 4 & 5 & 6 & 7 & 8 & 9 & 10 & 11 & 12 & 13 & 14 & 15 & 16 &
total \\
\hline
$\mathrm{dim} J_1^{(\ell)}$ &
1 &  &  &  &  &  &  &  & & & & & & & & & 
1 \\
\hline
$\mathrm{dim} J_2^{(\ell)}$ &
0 & 1 &  &  &  &  &  &  & & & & & & & & & 
1 \\
\hline
$\mathrm{dim} J_3^{(\ell)}$ &
0 & 0 & 1 &  &  &  &  &  & & & & & & & & & 
1 \\
\hline
$\mathrm{dim} J_4^{(\ell)}$ &
0 & 1 & 0 & 1 &  &  &  &  & & & & & & & & & 
2 \\
\hline
$\mathrm{dim} J_5^{(\ell)}$ &
0 & 0 & 1 & 0 & 1 &  &  &  & &  & & & & & & &
2 \\
\hline
$\mathrm{dim} J_6^{(\ell)}$ &
0 & 1 & 1 & 1 & 0 & 1 &  &  & & & & & & & &  &
4 \\
\hline
$\mathrm{dim} J_7^{(\ell)}$ &
0 & 0 & 1 & 1 & 1 & 0 & 1 &  & & & & & & & & &
4 \\
\hline
$\mathrm{dim} J_8^{(\ell)}$ & 
0 & 1 & 1 & 2 & 1 & 1 & 0 & 1 & & & & & & & & &
7 \\
\hline
$\mathrm{dim} J_9^{(\ell)}$ &
0 & 0 & 2 & 1 & 2 & 1 & 1 & 0 & 1 & & & & & & & &
8 \\
\hline
$\mathrm{dim} J_{10}^{(\ell)}$ &
0 & 1 & 1 & 3 & 2 & 2 & 1 & 1 & 0 & 1 & & & & & & &
12 \\
\hline
$\mathrm{dim} J_{11}^{(\ell)}$ &
0 & 0 & 2 & 2 & 3 & 2 & 2 & 1 & 1 & 0 & 1 & & & & & &
14 \\
\hline
$\mathrm{dim} J_{12}^{(\ell)}$ &
0 & 1 & 2 & 4 & 3 & 4 & 2 & 2 & 1 & 1 & 0 & 1 & & & & & 21
\\
\hline
$\mathrm{dim} J_{13}^{(\ell)}$ &
0 & 0 & 2 & 3 & 
5 & 3 & 4 & 2 & 2 & 
1 & 1 & 0 & 1 & & & & 
24 \\
\hline
$\mathrm{dim} J_{14}^{(\ell)}$ &
0 & 1 & 2 & 5 & 
5 & 6 & 4 & 4 & 2 & 
2 & 1 & 1 & 0 & 1 & & & 
34 \\
\hline
$\mathrm{dim} J_{15}^{(\ell)}$ &
0 & 0 & 3 & 4 & 
7 & 6 & 6 & 4 & 4 & 
2 & 2 & 1 & 1 & 0 & 1 & & 
41 \\
\hline
$\mathrm{dim} J_{16}^{(\ell)}$ &
0 & 1 & 2 & 7 & 7 & 
9 & 7 & 7 & 4 & 4 & 
2 & 2 & 1 & 1 & 0 & 
1 & 
55 \\
\end{tabular}
\end{center}


\subsection{Lists of bases}

In the following, the monomials $x_\lambda$ in the invariant polynomial $g_\beta(\vec{x})$ are arranged by using the ordering $L'_n$ on the set of partitions \cite{M}.

\subsubsection{$\ell = 1$ and $\ell = 2$}

We have
\begin{align*}
\mathcal{J}^{(1)}(t) &= t, &
\mathcal{J}^{(2)}(t) &= t^2 + t^4 + t^6 + t^8 + \cdots
\end{align*}

The base in $J^{(1)}$ is $g_{(1)} = x_1$ and the bases in $J^{(2)}$ are
\begin{align*}
g_{(0,1)}
&=
x_1^2 \\
g_{(0,2)}
&=
x_2^2 - 2 x_1x_3 \\
g_{(0,3)}
&=
x_3^2 - 2 x_2x_4 + 2 x_1x_5 \\
g_{(0,4)}
&=
x_4^2 - 2 x_3x_5 + 2 x_2x_6 - 2x_1x_7 \\
g_{(0,5)}
&=
x_5^2 - 2 x_4x_6 + 2 x_3x_7 - 2x_2x_8 + 2 x_1x_9 \\
 & \vdots \\
g_{(0, m)}
&=
x_m^2 - 2 x_{m-1}x_{m+1} + 2 x_{m-2}x_{m+2} - \cdots
+ (-1)^{m-1} 2 x_1x_{2m-1}
\end{align*}


\subsubsection{$\ell = 3$}

We have
$$
\mathcal{J}^{(3)}(t) = t^3 + t^5 + t^6 + t^7 + t^8 + 2t^9 + 
t^{10} + 2t^{11} + 2t^{12} + 2t^{13} + 2t^{14} + 3t^{15} + \cdots
$$

\begin{align*}
g_{(0,0,1)}
&=
x_1^3 \\
g_{(0,1,1)}
&=
x_1(x_2^2 - 2x_1x_3) \\
g_{(0,0,2)}
&=
x_2^3 - 3 x_1x_2x_3 + 3 x_1^2x_4 \\
g_{(0,2,1)}
&=
x_1(x_3^2 - 2x_2x_4 + 2x_1x_5) \\
g_{(0,1,2)}
&=
x_2x_3^2 - 2 x_2^2x_4 - x_1x_3x_4 + 5 x_1x_2x_5 - 5 x_1^2x_6 \\
g_{(0,0,3)}
&=
x_3^3 - 3 x_2x_3x_4 + 3 x_2^2x_5 
+ 3 x_1x_4^2 - 3 x_1x_3x_5 - 3 x_1x_2x_6 + 3 x_1^2x_7 \\
g_{(0,3,1)}
&=
x_1(x_4^2 - 2 x_3x_5 + 2 x_2x_6 - 2 x_1x_7) \\
g_{(0,2,2)} 
&=
x_2x_4^2 - 2 x_2x_3x_5 + 2 x_2^2x_6 - x_1x_4x_5 + 3 x_1x_3x_6 
- 7 x_1x_2x_7 + 7 x_1^2x_8 \\
g_{(0,1,3)}
&=
x_3x_4^2 - 2 x_3^2x_5 - x_2x_4x_5  + 5 x_2x_3x_6 - 5 x_2^2x_7 \\
&\quad 
+ 4 x_1x_5^2 - 7 x_1x_4x_6 + 2 x_1x_3x_7 + 8 x_1x_2x_8 - 8 x_1^2x_9 \\
g_{(0,4,1)}
&=
x_1(x_5^2 - 2 x_4x_6 + 2 x_3x_7 - 2 x_2x_8 + 2 x_1x_9) \\
 & \\
g_{(0,3,2)}
&=
x_2x_5^2 - 2 x_2x_4x_6 +  2 x_2x_3x_7 - 2 x_2^2x_8 \\
&\quad
- x_1x_5x_6 + 3 x_1x_4x_7  - 5 x_1x_3x_8 + 9 x_1x_2x_9 -9 x_1^2 x_{10} \\
g_{(0,0,4)}
&=
x_4^3 - 3 x_3x_4x_5 + 3 x_2x_5^2 + 3 x_3^2x_6 
- 3 x_2x_4x_6  - 3 x_2x_3x_7  + 3 x_2^2x_8 \\
&\quad
- 3 x_1x_5x_6 + 6 x_1x_4x_7 - 3 x_1x_3x_8 - 3 x_1x_2x_9 + 3 x_1^2x_{10} \\
 & \\
g_{(0,2,3)}
&=
x_3 x_5^2 - 2 x_3 x_4 x_6 + 2 x_3^2 x_7 
- x_2 x_5 x_6 + 3 x_2 x_4 x_7 - 7 x_2 x_3 x_8 + 7 x_2^2 x_9 \\
&\quad
+ 5 x_1x_6^2 - 9 x_1x_5x_7 + 6 x_1x_4x_8 + x_1x_3x_9
- 15 x_1x_2x_{10} + 15 x_1^2x_{11} \\
g_{(0,5,1)}
&=
x_1(x_6^2 - 2 x_5 x_7 +  2 x_4 x_8 - 2 x_3 x_9
+ 2 x_2x_{10} - 2 x_1x_{11}) \\
 & \\
g_{(0,4,2)}
&=
x_2x_6^2 - 2 x_2x_5x_7 + 2 x_2x_4x_8 - 2 x_2x_3x_9 + 2 x_2^2x_{10} \\
&\quad
- x_1 x_6 x_7 
+ 3 x_1 x_5 x_8 
-   5 x_1 x_4 x_9
+ 7 x_1 x_3 x_{10}  
- 11 x_1 x_2 x_{11} 
+ 11 x_1^2 x_{12} \\
g_{(0,1,4)}
&=
x_4x_5^2 - 2 x_4^2x_6 - x_3x_5x_6 + 5 x_3x_4x_7 - 5 x_3^2x_8 \\
&\quad 
+ 4 x_2x_6^2 - 7 x_2x_5x_7 + 2 x_2x_4x_8 + 8 x_2x_3x_9 - 8 x_2^2x_{10} \\
& \quad
- 4 x_1x_6x_7 
+ 11 x_1x_5x_8 
- 13 x_1x_4x_9
+ 5 x_1x_3x_{10}  
+ 11 x_1x_2x_{11}  
-11 x_1^2x_{12} \\
 & \\
g_{(0,0,5)}
&=
x_5^3 
- 3 x_4 x_5 x_6 + 3 x_4^2 x_7 
+ 3 x_3 x_6^2 - 3 x_3 x_5 x_7 - 3 x_3 x_4 x_8 + 3 x_3^2 x_9 \\
&\quad
- 3 x_2 x_6 x_7 
+ 6 x_2 x_5 x_8 
- 3 x_2 x_4 x_9 
- 3 x_2 x_3 x_{10}
+ 3 x_2^2 x_{11} 
+ 3 x_1 x_7^2 \\
&\quad
- 3 x_1 x_6 x_8 
- 3 x_1 x_5 x_9
+ 6 x_1 x_4 x_{10} 
- 3 x_1 x_3 x_{11} 
- 3 x_1 x_2 x_{12}  
+ 3 x_1^2 x_{13} \\
g_{(0,3,3)}
&=
x_3 x_6^2 
- 2 x_3 x_5 x_7 
+ 2 x_3 x_4 x_8 
- 2 x_3^2 x_9 
- x_2 x_6 x_7 \\
&\quad
+ 3 x_2 x_5 x_8 
- 5 x_2 x_4 x_9 
+ 9 x_2 x_3 x_{10} 
- 9 x_2^2 x_{11} 
+ 6 x_1 x_7^2 
- 11 x_1 x_6 x_8 \\
&\quad
+ 8 x_1 x_5 x_9
- 3 x_1 x_4 x_{10}  
- 6 x_1 x_3 x_{11}  
+ 24 x_1 x_2 x_{12}  
-24 x_1^2 x_{13} \\
g_{(0,6,1)}
&=
x_1 (x_7^2 
- 2 x_6 x_8 
+ 2 x_5 x_9
- 2 x_4 x_{10}  
+ 2 x_3 x_{11}  
- 2 x_2 x_{12}  
+ 2 x_1 x_{13})
\end{align*}


\subsubsection{$\ell = 4$}

We have
$$
\mathcal{J}^{(4)}(t) = t^4 + t^6 + t^7 + 2t^8 + t^9 + 
3t^{10} + 2t^{11} + 4t^{12}  + 
3 t^{13} + 5 t^{14} + 4 t^{15} + \cdots
$$

\begin{align*}
g_{(0,0,0,1)}
&=
x_1^4 \\
g_{(0,1,0,1)}
&=
x_1^2(x_2^2 - 2 x_1x_3) \\
g_{(0,0,1,1)}
&=
x_1(x_2^3 - 3 x_1x_2x_3 + 3 x_1^2x_4) \\
g_{(0,2,0,1)}
&=
x_1^2(x_3^2 - 2 x_2x_4 + 2 x_1x_5) \\
g_{(0,0,0,2)}
&=
x_2^4 - 4 x_1x_2^2x_3 + 2x_1^2x_3^2 + 4x_1^2x_2x_4 - 4x_1^3x_5 \\
g_{(0,1,1,1)}
&=
x_1(x_2x_3^2 - 2 x_2^2x_4 - x_1x_3x_4 + 5 x_1x_2x_5 - 5 x_1^2x_6) \\
 & \\
g_{(0,0,2,1)}
&=
x_1(x_3^3 - 3 x_2 x_3 x_4 + 3 x_2^2 x_5+ 3 x_1 x_4^2
- 3 x_1 x_3 x_5 - 3 x_1 x_2 x_6 + 3 x_1^2 x_7) \\
g_{(0,1,0,2)}
&=
x_2^2 x_3^2  - 2 x_2^3 x_4 
- 2 x_1 x_3^3
+ 4 x_1 x_2 x_3 x_4 
+ 2 x_1 x_2^2 x_5 \\
&\quad
- 3 x_1^2 x_4^2 + 2 x_1^2 x_3 x_5 - 6 x_1^2 x_2 x_6 + 6 x_1^3 x_7 \\
g_{(0,3,0,1)}
&=
x_1^2(x_4^2 - 2 x_3 x_5 + 2 x_2 x_6 - 2 x_1 x_7) \\
 & \\
g_{(0,0,1,2)}
&=
x_2 x_3^3 - 3 x_2^2 x_3 x_4 + 3 x_2^3 x_5 
- x_1 x_3^2 x_4 
+ 5 x_1 x_2 x_4^2
- 2 x_1 x_2 x_3 x_5
- 7 x_1 x_2^2 x_6 \\
&\quad
- 5 x_1^2 x_4 x_5
+ 7 x_1^2 x_3 x_6
+ 7 x_1^2 x_2 x_7 
- 7 x_1^3 x_8 \\
g_{(0,2,1,1)}
&=
x_1(x_2 x_4^2 - 2 x_2 x_3 x_5 + 2 x_2^2 x_6 
- x_1 x_4 x_5 
+ 3 x_1 x_3 x_6 
- 7 x_1 x_2 x_7 
+ 7 x_1^2 x_8) \\
 & \\
g_{(0,0,0,3)}
&=
x_3^4 - 4 x_2x_3^2x_4 + 2 x_2^2x_4^2  + 4 x_2^2x_3x_5 - 4 x_2^3x_6 \\
&\quad
+ 4 x_1 x_3 x_4^2 - 4 x_1 x_3^2 x_5
- 8 x_1 x_2 x_4 x_5 + 8 x_1 x_2 x_3 x_6 + 4 x_1 x_2^2 x_7 \\
&\quad
+ 6 x_1^2 x_5^2
- 4 x_1^2 x_4 x_6
- 4 x_1^2 x_3 x_7
- 4 x_1^2 x_2 x_8 
+ 4 x_1^3 x_9 \\
g_{(0,2,0,2)}
&=
x_2^2 x_4^2  - 2 x_2^2 x_3 x_5   + 2 x_2^3 x_6 \\
&\quad
- 2 x_1 x_3 x_4^2
+ 4 x_1 x_3^2 x_5
- 4 x_1 x_2 x_3 x_6  
- 2 x_1 x_2^2 x_7 \\
&\quad
- 4 x_1^2 x_5^2
+ 8 x_1^2 x_4 x_6
- 4 x_1^2 x_3 x_7
+ 8 x_1^2 x_2 x_8 
- 8 x_1^3 x_9 \\
g_{(0,1,2,1)}
&=
x_1 \left(
x_3x_4^2 - 2 x_3^2x_5 - x_2x_4x_5  + 5 x_2x_3x_6 - 5 x_2^2x_7 \right. \\
&\quad \left.
+ 4 x_1x_5^2 - 7 x_1x_4x_6 + 2 x_1x_3x_7 + 8 x_1x_2x_8 - 8 x_1^2x_9 \right) \\
g_{(0,4, 0,1)}
&=
x_1^2(x_5^2 - 2 x_4 x_6 + 2 x_3 x_7 - 2 x_2 x_8 + 2 x_1 x_9)
\end{align*}

\subsubsection{$\ell \ge 5$}


$$
\mathcal{J}^{(5)}(t)
=
t^5 + t^7 + t^8 + 2 t^9 + 2 t^{10} + 3 t^{11} + 3 t^{12} 
+ 5 t^{13} + 5 t^{14} + 7 t^{15} + \cdots.
$$
\begin{align*}
g_{(0,0,0,0,1)}
&=
x_1^5 \\
g_{(0,1,0,0,1)}
&=
x_1^3(x_2^2 - 2 x_1x_3) \\
g_{(0,0,1,0,1)}
&=
x_1^2(x_2^3 - 3 x_1x_2x_3 + 3 x_1^2x_4) \\
 & \\
g_{(0,0,0,1,1)}
&=
x_1(x_2^4 - 4 x_1x_2^2x_3 + 2 x_1^2x_3^2 + 4 x_1^2x_2x_4 - 4 x_1^3x_5) \\
g_{(0,2,0,0,1)}
&=
x_1^3(x_3^2 - 2 x_2x_4 + 2 x_1x_5) \\
 & \\
g_{(0,0,0,0,2)}
&=
x_2^5 - 5 x_1 x_2^3 x_3 + 5 x_1^2 x_2 x_3^2 + 5 x_1^2 x_2^2 x_4 
- 5 x_1^3 x_3 x_4 - 5 x_1^3 x_2 x_5 + 5 x_1^4 x_6 \\
g_{(0,1,1,0,1)}
&=
x_1^2(x_2x_3^2 - 2 x_2^2x_4 - x_1x_3x_4 + 5 x_1x_2x_5 - 5 x_1^2x_6) \\
 & \\
g_{(0,1,0,1,1)}
&=
x_1(x_2^2 x_3^2 - 2 x_2^3 x_4
- 2 x_1 x_3^3
+ 4 x_1 x_2 x_3 x_4
+ 2 x_1 x_2^2 x_5 \\
&\quad
- 3 x_1^2 x_4^2
+ 2 x_1^2 x_3 x_5
- 6 x_1^2 x_2 x_6
+ 6 x_1^3 x_7) \\
g_{(0,0,2,0,1)}
&=
x_1^2(x_3^3 - 3 x_2 x_3 x_4 + 3 x_2^2 x_5 + 3 x_1 x_4^2 
- 3 x_1 x_3 x_5 - 3 x_1 x_2 x_6 + 3 x_1^2 x_7) \\
g_{(0,3,0,0,1)}
&=
x_1^3 (x_4^2 - 2 x_3 x_5 + 2 x_2 x_6 - 2 x_1 x_7)
\end{align*}

$$
\mathcal{J}^{(6)}(t)
=
t^6 + t^8 + t^9 + 2 t^{10} + 2 t^{11} + 4 t^{12} + 
3 t^{13} + 6 t^{14} + 6 t^{15} + \cdots.
$$
\begin{align*}
g_{(0,0,0,0,0,1)}
&=
x_1^6 \\
g_{(0,1,0,0,0,1)}
&=
x_1^4(x_2^2 - 2 x_1x_3) \\
g_{(0,0,1,0,0,1)}
&=
x_1^3(x_2^3 - 3 x_1x_2x_3 + 3 x_1^2x_4) \\
 & \\
g_{(0,0,0,1,0,1)}
&=
x_1^2(x_2^4 - 4 x_1x_2^2x_3 + 2 x_1^2x_3^2 + 4 x_1^2x_2x_4 - 4 x_1^3x_5) \\
g_{(0,2,0,0,0,1)}
&=
x_1^4 (x_3^2 - 2 x_2 x_4 + 2 x_1 x_5) \\
 & \\
g_{(0,0,0,0,1,1)}
&=
x_1(x_2^5 - 5 x_1x_2^3x_3 + 5 x_1^2x_2x_3^2 + 5 x_1^2x_2^2x_4 \\
&\quad 
- 5 x_1^3 x_3 x_4 - 5 x_1^3 x_2 x_5 + 5 x_1^4 x_6) \\
g_{(0,1,1,0,0,1)}
&=
x_1^3(x_2 x_3^2 - 2 x_2^2x_4 - x_1x_3x_4 + 5 x_1x_2x_5 - 5 x_1^2x_6)
\end{align*}

$$
\mathcal{J}^{(7)}(t)
=
t^7 + t^9 + t^{10} + 2 t^{11} + 2 t^{12} 
+ 4 t^{13} + 4 t^{14} + 6 t^{15} + \cdots
$$

\begin{align*}
g_{(0,0,0,0,0,0,1)}
&=
x_1^7 \\
g_{(0,1,0,0,0,0,1)}
&=
x_1^5(x_2^2 - 2 x_1x_3) \\
g_{(0,0,1,0,0,0,1)}
&=
x_1^4(x_2^3 - 3 x_1x_2x_3 + 3 x_1^2x_4)
\end{align*}




\begin{thebibliography}{999}


\bibitem{A-Se1}M. F. Atiyah and G. Segal,
\textit{Twisted $K$-theory}.
Ukr. Mat. Visn. 1 (2004), no. 3, 287--330;
translation in Ukr. Math. Bull. 1 (2004), no. 3, 291--334.

\bibitem{A-Se2}M. F. Atiyah and G. Segal,
\textit{Twisted $K$-theory and cohomology}.  
Inspired by S. S. Chern,  5-43, Nankai Tracts Math., 11, 
World Sci. Publ., Hackensack, NJ, 2006.

\bibitem{M}I. G. Macdonald,
\textit{Symmetric functions and Hall polynomials}. 
Second edition. With contributions by A. Zelevinsky. 
Oxford Mathematical Monographs. Oxford Science Publications. 
The Clarendon Press, Oxford University Press, New York, 1995.


\end{thebibliography}
\end{document}